\theoremstyle{plain}
\newtheorem{thm}{Theorem}[section]
\newtheorem{cor}[thm]{Corollary}
\newtheorem{lem}[thm]{Lemma}
\newtheorem{prop}[thm]{Proposition}
\theoremstyle{definition}
\newtheorem{defn}[thm]{Definition}
\theoremstyle{remark}
\newcommand{\beast}{\begin{eqnarray*}}
\newcommand{\eeast}{\end{eqnarray*}}
\title{ \bf Lit-only sigma-game on nondegenerate graphs
}
\author{Hau-wen Huang}
\date{}
\begin{document}
\maketitle

\begin{abstract}
A configuration of the lit-only $\sigma$-game on a graph $\Gamma$ is an assignment of one of two states, {\it on} or {\it off}, to each vertex of $\Gamma.$ Given a configuration, a move of the lit-only $\sigma$-game on $\Gamma$ allows the player to choose an {\it on} vertex $s$ of $\Gamma$ and change the states of all neighbors of $s.$ Given an integer $k$, the underlying graph $\Gamma$ is said to be $k$-lit if for any configuration, the number of {\it on} vertices can be reduced to at most $k$ by a finite sequence of moves. We give a description of the orbits of the lit-only $\sigma$-game on nondegenerate graphs $\Gamma$ which are not line graphs. We show that these graphs $\Gamma$ are $2$-lit and provide a linear algebraic criterion for $\Gamma$ to be $1$-lit.
\end{abstract}

{\footnotesize{\bf Keywords:} group action, lit-only $\sigma$-game,  nondegenerate graph}

{\footnotesize {\bf 2010  MSC Primary:} 05C57; {\bf Secondary:} 15A63, 20F55}

\section{Introduction}\label{Introduction}

The notion of the {\it $\sigma$-game} on finite directed graphs $\Gamma$ without multiple edges was first introduced by Sutner \cite{ks:89} in 1989. A {\it configuration} of the $\sigma$-game on $\Gamma$ is an assignment of one of two states, {\it on} or {\it off}, to each vertex of $\Gamma$. Given a configuration, a {\it move} consists of choosing a vertex of $\Gamma$, followed by changing the states of all of its neighbors.  
If only {\it on} vertex is allowed to choose in each move, we come to the variation: {\it lit-only $\sigma$-game}. Starting from an initial configuration, the goal of the lit-only $\sigma$-game on $\Gamma$ is to minimize the number of {\it on} vertices of $\Gamma$, or to reach an assigned configuration by a finite sequence of moves.

Given an integer $k$, the underlying graph $\Gamma$ is said to be {\it $k$-lit} if for any configuration, the number of {\it on} vertices can be reduced to at most $k$ by a finite sequence of the moves.
More precisely, we are interested in the orbits of the lit-only $\sigma$-game on $\Gamma$ and the smallest integer $k$, the {\it minimum light number} of $\Gamma$ \cite{xwykw:07}, for which $\Gamma$ is $k$-lit. The notion of lit-only $\sigma$-games occurred implicitly in the study of equivalence classes of Vogan diagrams. The Borel-de Siebenthal theorem \cite{abjs:49} showed that every Vogan diagram is equivalent to one with a single painted vertex, which implies that each simply-laced Dynkin diagram is $1$-lit. The equivalence classes of Vogan diagrams were described by Chuah and Hu \cite{mkc:04}. A conjecture made by Chang \cite{gchang:05,gchang2:05} that any tree with $k$ leaves is $\lceil k/2 \rceil$-lit was confirmed by Wang and Wu \cite{xwykw:07}, where the name ``lit-only $\sigma$-game" was coined.

The lit-only $\sigma$-game on a simple graph $\Gamma$ is simply the natural action of a certain subgroup $H_{\it \Gamma}$ of the general linear group over $\mathbb{F}_2$ \cite{xwykw:07}. Under the assumption that $\Gamma$ is the line graph of a simple graph $G,$ Wu \cite{ykw:08} described the orbits of the lit-only $\sigma$-game on $\Gamma$ and gave a characterization for the minimum light number of $\Gamma$. Moreover, if $G$ is a tree of order $n\geq 3 $, Wu showed that $H_{\it \Gamma}$ is isomorphic to the symmetric group on $n$  letters. Weng and the author \cite{hw:09-2} determined the structure of $H_{\it \Gamma}$ without any assumption on $G$. The lit-only $\sigma$-game on a simple graph $\Gamma$ can also be considered as a representation $\kappa_{\it \Gamma}$ of the simply-laced Coxeter group $W_{\it \Gamma}$ over $\mathbb{F}_2$ \cite{hw:08-1}. The dual representation of $\kappa_{\it \Gamma}$  preserves a certain symplectic form $B_{\it \Gamma}$. The two representations are equivalent whenever the form $B_{\it \Gamma}$ is nondegenerate. From this viewpoint it is natural to partition simple connected graphs into two classes according as $B_{\it \Gamma}$ is degenerate or nondegenerate.

In this paper we treat nondegenerate graphs $\Gamma$ which are not line graphs. We show that $H_{\it \Gamma}$ is isomorphic to an orthogonal group, followed by a description of the orbits of lit-only $\sigma$-game on $\Gamma$ (Theorem \ref{thm1.2}). Moreover we show that these graphs $\Gamma$ are 2-lit and provide a linear algebraic criterion for $\Gamma$ to be $1$-lit (Theorem \ref{thm1.3}). Combining Theorem~\ref{thm1.2}, Theorem~\ref{thm1.3} and those in \cite{hw:09-2} and \cite{ykw:08}, the study of the lit-only $\sigma$-game on nondegenerate graphs is quite completed and the focus for further research is on degenerate graphs.

\section{Preliminaries}\label{Preliminaries}
From now on, let $\Gamma=(S,R)$ denote a finite simple connected graph with vertex set $S$ and edge set $R.$ Let $\mathbb{F}_2$ denote the two-element field $\{0,1\}.$ Let $V$ denote a $\mathbb{F}_2$-vector space that has a basis $\{\alpha_s~|~s\in S\}$ in one-to-one correspondence with $S.$ Let $V^\ast$ denote the dual space of $V.$ For each $s\in S$ we define $f_s\in V^\ast$ by
\begin{eqnarray}\label{e1.1}
f_s(\alpha_t)=\left\{
\begin{array}{ll}
1 \qquad &\hbox{if $s=t,$}\\
0 \qquad &\hbox{else}
\end{array}
\right.
\end{eqnarray}
for all $t\in S$. The set $\{f_s~|~s\in S\}$ forms a basis of $V^\ast$ and is called the {\it basis of $V^\ast$ dual to $\{\alpha_s~|~s\in S\}.$} Each configuration $f$ of the lit-only $\sigma$-game on $\Gamma$ is interpreted as the vector
\begin{align}\label{e1.4}
\sum_{\hbox{\scriptsize {\it on} vertices $s$}} f_s\in V^\ast,
\end{align}
if all vertices of $\Gamma$ are assigned the {\it off} state by $f,$ we interpret  (\ref{e1.4}) as the zero vector of $V^\ast.$ Given $s\in S$ and $f\in V^\ast$ observe that $f(\alpha_s)=1$ (resp. 0) if and only if the vertex $s$ is assigned the {\it on} (resp. {\it off} ) state by $f.$ For each $s\in S$ define a linear transformation $\kappa_s:V^\ast\rightarrow V^\ast$ by
\begin{eqnarray}\label{e1.3}
\kappa_sf = f+f(\alpha_s)\sum_{st\in R}f_{t} \qquad \quad \hbox{for all $f\in V^\ast.$}
\end{eqnarray}
Fix a vertex $s$ of $\Gamma.$ Given any $f\in V^\ast,$ if the state of $s$ is {\it on} then $\kappa_sf$ is obtained from $f$ by changing the states of all neighbors of $s,$ and $\kappa_sf=f$ otherwise. Therefore we may view $\kappa_s$ as the move of the lit-only $\sigma$-game on $\Gamma$ for which we choose the vertex $s$ and change the states of all neighbors of $s$ if the state of $s$ is {\it on}. In particular $\kappa_s^2=1$, the identity map on $V^\ast,$ and so $\kappa_s\in {\rm GL}(V^\ast),$ the general linear group of $V^\ast$. The subgroup $H=H_{\it \Gamma}$ of ${\rm GL}(V^\ast)$ generated by the $\kappa_s$ for all $s\in S$ was first mentioned by Wu \cite{xwykw:07}, which is called the {\it flipping group} of $\Gamma$ in \cite{hw:08-1} and the {\it lit-only group} of $\Gamma$ in \cite{ykw:08}.

The simply-laced Coxeter group $W=W_{\it \Gamma}$ associated with $\Gamma=(S,R)$ is defined by generators and relations. The generators are the elements of $S$ and the relations are
\begin{eqnarray*}
s^2&=&1,\\
(st)^2&=&1 \qquad \hbox{if $st\not\in R,$}\\
(st)^3&=&1 \qquad \hbox{if $st\in R$.}
\end{eqnarray*}
By \cite[Theorem~3.2]{hw:08-1}, there exists a unique representation $\kappa=\kappa_{\it \Gamma}:W\rightarrow {\rm GL}(V^\ast)$ such that  $\kappa(s)=\kappa_s$ for all $s\in S$. Clearly $\kappa(W)=H$. Given any $f,g\in V^\ast$ observe that $g$ can be obtained from $f$ by a finite sequence of the moves of the lit-only $\sigma$-game on $\Gamma$ if and only if there exists $w\in W$ such that $g=\kappa(w)f.$ Given an integer $k$, the underlying graph $\Gamma$ is $k$-lit if and only if for each $\kappa(W)$-orbit $O$ on $V^\ast$, there exists a subset $K$ of $S$ with size at most $k$ such that $\sum_{s\in K} f_s\in O$.



A symplectic form $B=B_{\it \Gamma}$ on $V$ is defined by
\begin{eqnarray}\label{e1.2}
B(\alpha_s,\alpha_t)=\left\{
\begin{array}{ll}
1 \qquad &\hbox{if $st\in R,$}\\
0 \qquad &\hbox{else}
\end{array}
\right.
\end{eqnarray}
for all $s,t\in S$ \cite{rd:05}. The {\it radical} of $V$ (relative to $B$) is the subspace of $V$ consisting of the vectors $\alpha$ that satisfy $B(\alpha,\beta)=0$ for all $\beta\in V.$ The form $B$ is said to be {\it degenerate} whenever the radical of $V$ is nonzero and {\it nondegenerate} otherwise. The graph $\Gamma$ is said to be {\it degenerate} whenever the form $B$ is degenerate, and {\it nondegenerate} otherwise. The form $B$ induces a linear map $\theta:V\rightarrow V^*$ given by
\begin{eqnarray}
\theta(\alpha)\beta=B(\alpha,\beta)  \qquad \qquad \hbox{for all $\alpha,\beta\in V.$}  \label{e1.5}
\end{eqnarray}
Since the kernel of $\theta$ is the radical of $V$ and the matrix representing $B$ with respect to the basis $\{\alpha_s~|~s\in S\}$ is the adjacency matrix of $\Gamma$ over $\mathbb{F}_2$, the following lemma is straightforward.

\begin{lem}\label{lem4.2}
Let $A$ denote the adjacency matrix of $\Gamma$ over $\mathbb{F}_2.$ Then the following are equivalent:
\begin{enumerate}
\item $\Gamma$ is a nondegenerate graph.

\item $\theta$ is an isomorphism of vector spaces.

\item $A$ is invertible.
\end{enumerate}
\end{lem}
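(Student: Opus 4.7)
The plan is to prove the chain (i) $\Leftrightarrow$ (ii) $\Leftrightarrow$ (iii) by invoking, in succession, the two observations the excerpt has just pointed out: first, that $\ker \theta$ coincides with the radical of $V$, and second, that the matrix of $B$ in the basis $\{\alpha_s \mid s \in S\}$ is the adjacency matrix $A$. Since the lemma is essentially bookkeeping in linear algebra (the author even remarks that it is ``straightforward''), I do not expect any real obstacle; the only point requiring a whisper of care is that $\theta$ is a map between vector spaces of the same finite dimension, so injectivity upgrades for free to bijectivity.

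For (i) $\Leftrightarrow$ (ii) I would argue as follows. By the definition recalled just before the lemma, $\Gamma$ is nondegenerate iff the form $B$ is nondegenerate iff the radical of $V$ is the zero subspace. Because $\ker \theta$ equals this radical (immediate from (\ref{e1.5})), nondegeneracy of $\Gamma$ is equivalent to $\theta$ being injective. Since $\dim V = \dim V^* = |S|$ is finite, injectivity of the linear map $\theta$ is equivalent to $\theta$ being an isomorphism, giving (i) $\Leftrightarrow$ (ii).

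For (ii) $\Leftrightarrow$ (iii) I would compute the matrix of $\theta$ with respect to the basis $\{\alpha_s \mid s \in S\}$ of $V$ and the dual basis $\{f_s \mid s \in S\}$ of $V^*$. For each $s \in S$, expanding $\theta(\alpha_s)$ in the dual basis and evaluating at $\alpha_t$ gives, by (\ref{e1.5}) and (\ref{e1.2}),
\[
\theta(\alpha_s) \;=\; \sum_{t \in S} B(\alpha_s,\alpha_t)\, f_t \;=\; \sum_{st \in R} f_t,
\]
so the matrix representing $\theta$ in these bases is precisely $A$ (which is symmetric, so the choice of row-vs.-column convention is immaterial). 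Consequently $\theta$ is an isomorphism iff $A$ is invertible, giving (ii) $\Leftrightarrow$ (iii) and completing the proof.
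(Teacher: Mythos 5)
Your proof is correct and follows exactly the route the paper intends: the paper omits a formal proof, remarking only that the lemma is straightforward because $\ker\theta$ is the radical of $V$ and the matrix of $B$ in the basis $\{\alpha_s \mid s\in S\}$ is $A$, and your argument is precisely the fleshing-out of those two observations. No issues.
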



The purpose of this paper is to investigate the lit-only $\sigma$-game on nondegenerate graphs which are not line graphs. It is natural to ask how to determine if a nondegenerate graph is a line graph. Here we give two characterizations of nondegenerate line graphs.

\begin{lem}\label{lem5.1}
Assume that $\Gamma$ is the line graph of a simple connected graph $G$ of order $n$. Then $\theta(V)$ has dimension $n-1$ if $n$ is odd and has dimension $n-2$ if $n$ is even.
\end{lem}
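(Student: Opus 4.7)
The strategy is to reduce $\dim\theta(V)$ to a rank computation involving the vertex--edge incidence matrix of $G$. Since the matrix representing $B$ in the basis $\{\alpha_s\mid s\in S\}$ is the adjacency matrix $A$ of $\Gamma$ (as recalled just before Lemma~\ref{lem4.2}), one has $\dim\theta(V)=\mathrm{rank}(A)$ over $\mathbb{F}_2$. Let $M$ denote the $n\times |E(G)|$ vertex--edge incidence matrix of $G$ over $\mathbb{F}_2$. The first task is to establish the factorization $A=M^{T}M$ over $\mathbb{F}_2$: for edges $e,f$ of $G$, the $(e,f)$-entry of $M^{T}M$ equals, modulo $2$, the number of vertices of $G$ incident to both $e$ and $f$. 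This is $1$ precisely when $e\ne f$ share a common vertex (the line-graph adjacency relation) and is $0$ on the diagonal because each edge has exactly two endpoints.

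Given $A=M^{T}M$, I would compute $\mathrm{rank}(A)$ via the standard identity
\[
\mathrm{rank}(M^{T}M)=\mathrm{rank}(M)-\dim\bigl(\mathrm{image}(M)\cap\ker(M^{T})\bigr),
\]
which follows from $\ker(M^{T}M)=M^{-1}(\ker M^{T})$ together with the rank--nullity theorem.

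The remaining work is to evaluate the pieces on the right using connectedness of $G$. A vector $y\in\mathbb{F}_2^n$ lies in $\ker(M^{T})$ iff, regarded as a subset of $V(G)$, it meets every edge in an even number of endpoints; since $G$ is connected, such a subset must be either empty or all of $V(G)$. Hence $\ker(M^{T})=\langle\mathbf{1}\rangle$ and $\mathrm{rank}(M)=n-1$. The general identity $\mathrm{image}(M)=(\ker M^{T})^{\perp}$ with respect to the standard nondegenerate bilinear form on $\mathbb{F}_2^n$ then yields $\mathrm{image}(M)=\mathbf{1}^{\perp}$, so $\mathbf{1}\in\mathrm{image}(M)$ iff $\mathbf{1}\cdot\mathbf{1}=n$ is even. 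Thus the intersection $\mathrm{image}(M)\cap\ker(M^{T})$ has dimension $1$ when $n$ is even and $0$ when $n$ is odd, producing the desired values $n-2$ and $n-1$.

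The only substantive ingredient is the mod-$2$ factorization $A=M^{T}M$; the remainder is a clean bookkeeping of dimensions, with the parity of $n$ entering purely through whether the all-ones vector is isotropic for the standard form on $\mathbb{F}_2^n$. I do not foresee a genuine obstacle here, since connectedness of $G$ fully pins down $\ker(M^{T})$ and hence everything that follows.
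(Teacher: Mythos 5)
Your proof is correct and is essentially the paper's own argument in matrix form: the factorization $A=M^{T}M$ is exactly the paper's factorization $\theta=\lambda\circ\mu$ through the vertex space of $G$, with $\mu$ given by the incidence matrix $M$ and $\lambda$ by $M^{T}$, and both proofs then compute the rank by identifying $\ker(M^{T})=\langle\mathbf{1}\rangle$ (the paper's $\ker\lambda$) and checking whether $\mathbf{1}$ lies in the even-weight image subspace, i.e.\ whether $n$ is even.
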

\begin{proof}
Let $U$ denote the vertex space of $G$ over $\mathbb{F}_2.$ Define a linear map $\mu:V\to U$ by
\begin{eqnarray*}
\mu(\alpha_s)=u+v \qquad \quad \hbox{for all $s\in S$},
\end{eqnarray*}
where $u$ and $v$ are the two endpoints of $s$ in $G.$ Since $G$ is connected,  the image of $\mu$ is the subspace of $U$ consisting of these vectors each of which equals the sum of an even number of vertices of $U.$ Define a linear map $\lambda:U\to V^\ast$ by
\begin{eqnarray*}
\lambda(u)\alpha_s = \left\{
\begin{array}{ll}
1 \quad &\hbox{if $u$ is incident to $s$ in $G$,}\\
0 \quad &\hbox{else}
\end{array}
\right.
\end{eqnarray*}
for all $u\in U$ and for all $s\in S$. There is only one nonzero vector, the sum of all vertices of $G$, in the kernel of $\lambda$. Since $\theta=\lambda\circ \mu$ and by the above comments, the result follows.
\end{proof}

A {\it claw} is a tree with one internal vertex and three leaves.
A simple graph is said to be {\it claw-free} if it does not contain a claw as an induced subgraph. A {\it cut-vertex} of $\Gamma$ is a vertex of $\Gamma$ whose deletion increase the number of components. A {\it block} of $\Gamma$ is a maximal connected subgraph of $\Gamma$ without cut-vertices. A {\it block graph} is a simple connected graph in which every block is a complete graph.

\begin{lem}\label{lem5.2}
{\rm \cite[Theorem 8.5]{ha:72}.}
Let $\Gamma$ denote a simple connected graph. Then $\Gamma$ is the line graph of a tree if and only if $\Gamma$ is a claw-free block graph.
\end{lem}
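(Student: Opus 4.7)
The plan is to prove the two implications separately. The forward direction is a direct structural verification, while the reverse direction requires an explicit reconstruction of the tree $T$ from the block decomposition of $\Gamma$; I expect the latter to be the main work.

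For the forward direction, assume $\Gamma=L(T)$ for a tree $T$. I would prove claw-freeness by contradiction: a claw in $L(T)$ would require four edges $e,e_1,e_2,e_3$ of $T$ with each $e_i$ sharing a vertex with $e$ while no two of $e_1,e_2,e_3$ share a vertex. Since $e$ has only two endpoints, by pigeonhole some pair $e_i,e_j$ would share an endpoint of $e$, a contradiction. To show $\Gamma$ is a block graph with complete blocks, I would consider, for each vertex $v$ of $T$ of degree at least $2$, the clique $C_v$ in $L(T)$ formed by the edges of $T$ incident to $v$. Two such cliques $C_u,C_v$ meet in at most one vertex of $L(T)$ (namely the edge $uv$ when it exists in $T$), so they realize a decomposition of $\Gamma$ into complete subgraphs whose pairwise intersections are cut-vertices; that this coincides with the block decomposition follows from the acyclicity of $T$, since any cycle of $L(T)$ using edges from two distinct cliques $C_u$ and $C_v$ would pull back to a closed walk in $T$ traversing $uv$ twice.

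For the reverse direction, assume $\Gamma$ is a claw-free block graph. I would first observe that each vertex $x$ of $\Gamma$ lies in at most two blocks: if $x$ lay in three distinct blocks $B_1,B_2,B_3$, picking $w_i\in B_i\setminus\{x\}$ would yield an induced claw on $\{x,w_1,w_2,w_3\}$, since vertices from distinct blocks are mutually non-adjacent. I would then build $T$ as follows. Its vertex set consists of the blocks of $\Gamma$, together with one fresh pendant vertex $p_x$ for each vertex $x$ of $\Gamma$ that lies in only one block. Its edges are indexed by the vertices of $\Gamma$: a vertex $x$ lying in blocks $B_1,B_2$ contributes the edge $B_1B_2$, while a vertex $x$ lying in the unique block $B$ contributes the edge $Bp_x$. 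This gives a natural bijection between $V(\Gamma)$ and $E(T)$.

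The main obstacle is verifying that $T$ is a tree and that $L(T)=\Gamma$. Under the above bijection, two vertices of $\Gamma$ are adjacent if and only if they share a block, if and only if the corresponding edges of $T$ share an endpoint; this gives the isomorphism $L(T)\cong\Gamma$ once $T$ is a graph. Connectedness of $T$ follows from connectedness of $\Gamma$ via a straightforward induction on blocks. The subtle point is acyclicity: a cycle in $T$ would produce a cyclic sequence of blocks of $\Gamma$ linked through shared cut-vertices, contradicting the standard fact that the block-cut tree of a connected graph is a tree. This last reduction is where the hidden work lies, and where I expect the proof to actually require care rather than formalism.
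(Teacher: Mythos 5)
The paper offers no proof of this lemma at all: it is quoted verbatim as Theorem~8.5 of Harary's \emph{Graph Theory}, so there is nothing internal to compare your argument against. Your self-contained proof is essentially correct and follows the standard route. The forward direction's pigeonhole argument for claw-freeness is clean, and the identification of the blocks of $L(T)$ with the cliques $C_v$ is the right decomposition; the reverse direction's reconstruction of $T$ (blocks of $\Gamma$ as internal vertices, a pendant vertex for each non-cut vertex, edges of $T$ indexed by vertices of $\Gamma$) is exactly the classical construction, and your observation that claw-freeness forces each vertex into at most two blocks is the key point that makes the edge-assignment well defined. Two places deserve tightening. First, in the forward direction the claim that a cycle of $L(T)$ meeting two distinct cliques ``pulls back to a closed walk traversing $uv$ twice'' is the one genuinely nontrivial step and is stated too loosely; the cleanest version is that every edge of $L(T)$ lies in a unique $C_v$, each vertex $uv$ of $L(T)$ separates $C_u\setminus\{uv\}$ from $C_v\setminus\{uv\}$, and hence no $2$-connected subgraph can meet two cliques. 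Second, in the claw argument you assert that $w_1,w_2,w_3$ taken from distinct blocks through $x$ are pairwise non-adjacent; this needs the one-line justification that an edge $w_iw_j$ would create a triangle $xw_iw_j$, which being $2$-connected would lie in a single block and force $B_i=B_j$. With those two points made explicit, the proof is complete and is a reasonable substitute for the external citation.
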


The following proposition follows by combining Lemmas~\ref{lem4.2}--\ref{lem5.2}.

\begin{prop}\label{lem5.4}
Let $\Gamma$ denote a simple connected graph. Then the following are equivalent:
\begin{enumerate}
\item $\Gamma$ is a nondegenerate line graph.

\item $\Gamma$ is the line graph of an odd-order tree.

\item $\Gamma$ is a claw-free block graph of even order.
\end{enumerate}
\end{prop}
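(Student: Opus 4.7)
The plan is to prove the three implications (i) $\Rightarrow$ (ii) $\Rightarrow$ (iii) $\Rightarrow$ (i) by combining the dimension count in Lemma~\ref{lem5.1} with the characterization of line graphs of trees in Lemma~\ref{lem5.2} and the adjacency/nondegeneracy criterion of Lemma~\ref{lem4.2}. The key bridging quantity is $\dim \theta(V)$: it coincides with $|S|$ precisely when $\Gamma$ is nondegenerate, and it is determined by the order of the underlying graph when $\Gamma$ is a line graph.

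For (i) $\Rightarrow$ (ii), I would assume $\Gamma = L(G)$ for some simple connected $G$ of order $n$, so that $|S| = |E(G)|$. Lemma~\ref{lem4.2} gives $\dim \theta(V) = |S|$, while Lemma~\ref{lem5.1} forces $\dim \theta(V) \in \{n-1, n-2\}$. Hence $|E(G)| \le n-1$, which combined with the connectedness of $G$ yields $|E(G)| = n-1$ and $n$ odd; this makes $G$ an odd-order tree.

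For (ii) $\Rightarrow$ (iii), Lemma~\ref{lem5.2} immediately gives that $\Gamma$ is a claw-free block graph, and the order of $\Gamma$ equals $|E(G)| = n-1$, which is even since $n$ is odd. For (iii) $\Rightarrow$ (i), Lemma~\ref{lem5.2} produces a tree $G$ with $\Gamma = L(G)$. Writing $n = |V(G)|$, the even order $n - 1 = |S|$ of $\Gamma$ forces $n$ to be odd, and Lemma~\ref{lem5.1} then gives $\dim \theta(V) = n - 1 = \dim V$, so $\theta$ is an isomorphism and $\Gamma$ is nondegenerate by Lemma~\ref{lem4.2}.

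There is no real obstacle here: all three implications are essentially bookkeeping once one packages Lemmas \ref{lem4.2}, \ref{lem5.1}, \ref{lem5.2} correctly. The only subtle point is recognizing in (i) $\Rightarrow$ (ii) that a connected graph with at most $n-1$ edges must be a tree, which is where the non-line-graph case is ruled out and the parity of $n$ is pinned down.
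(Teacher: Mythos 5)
Your proof is correct and follows exactly the route the paper intends: the paper itself only says the proposition ``follows by combining Lemmas~\ref{lem4.2}--\ref{lem5.2},'' and your argument is precisely that combination, using $\dim\theta(V)=|S|=|E(G)|$ together with the $n-1$ versus $n-2$ dichotomy of Lemma~\ref{lem5.1} and Harary's characterization in Lemma~\ref{lem5.2}. No gaps; the observation that a connected graph with at most $n-1$ edges is a tree is the right way to pin down both the tree structure and the parity of $n$.
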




\section{Main results}


A {\it quadratic form $Q$ on $V$ {\rm (}associated with $B${\rm )}} is a function $Q:V\to \mathbb{F}_2$ satisfying
\begin{gather}\label{e2.2}
Q(\alpha+\beta)=Q(\alpha)+Q(\beta)+B(\alpha,\beta) \qquad \quad \hbox{for all $\alpha,\beta\in V.$}
\end{gather}
Let ${\rm GL}(V)$ denote the general linear group of $V.$ Given a quadratic form $Q$ on $V$, the {\it orthogonal group} with respect to $Q$ is the subgroup of ${\rm GL}(V)$ consisting of all $\sigma\in{\rm GL}(V)$ such that $Q(\sigma\alpha)=Q(\alpha)$ for all $\alpha\in V.$ Given a basis $P$ of $V$ we define $Q_P$ to be the quadratic form on $V$ satisfying $Q_P(\alpha)=1$ for all $\alpha\in P$.

For the rest of this paper, the form $B$ is assumed to be nondegenerate. Moreover let $Q=Q_P$ where $P=\{\alpha_s~|~s\in S\}$ and let $O(V)$ denote the orthogonal group with respect to $Q$. We now state the main results of this paper, which are Theorem~\ref{thm1.2}, Theorem~\ref{thm1.3}, and Corollary~\ref{cor1.1}.


\begin{thm}\label{thm1.2}
Assume that $\Gamma$ is a nondegenerate graph but not a line graph. Then $\kappa(W)$ is isomorphic to $O(V)$. Moreover the $\kappa(W)$-orbits on $V^\ast$ are
\begin{gather*}
\{0\}, \qquad \theta (Q^{-1}(0)\setminus\{0\}), \qquad \theta(Q^{-1}(1)).
\end{gather*}
\end{thm}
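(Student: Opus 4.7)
The plan is to transfer the lit-only action to $V$ via the isomorphism $\theta$ of Lemma~\ref{lem4.2}, recognize the generators as orthogonal transvections, and then combine Witt's theorem with a conjugation-of-transvections argument to identify $\kappa(W)$ with $O(V)$.

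First I would form $\tilde\kappa_s = \theta^{-1}\circ\kappa_s\circ\theta \in \mathrm{GL}(V)$ and compute it explicitly. Using $\sum_{st\in R}f_t = \theta(\alpha_s)$ (immediate from (\ref{e1.2}) and (\ref{e1.5})) together with $\theta(\alpha)(\alpha_s)=B(\alpha,\alpha_s)$, formula (\ref{e1.3}) yields
\[
\tilde\kappa_s(\alpha) \;=\; \alpha + B(\alpha,\alpha_s)\,\alpha_s,
\]
the orthogonal transvection $T_{\alpha_s}$ at the anisotropic vector $\alpha_s$. A one-line check using (\ref{e2.2}) and $Q(\alpha_s)=1$ then gives $T_{\alpha_s}\in O(V)$, so $\kappa(W)\cong \tilde\kappa(W)\subseteq O(V)$.

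Next I would pin down the orbit structure. Since $\tilde\kappa(W)\subseteq O(V)$ preserves $Q$, every orbit lies inside one of $\{0\}$, $Q^{-1}(0)\setminus\{0\}$, $Q^{-1}(1)$, and Witt's extension theorem (valid in characteristic~$2$ for the nondegenerate pair $(B,Q)$) shows that the full group $O(V)$ is transitive on each of the last two. Pulling back through $\theta$, the three $\kappa(W)$-orbits asserted in the theorem follow as soon as we prove $\tilde\kappa(W)=O(V)$.

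The heart of the proof is that generation claim, which I would attack via a standard two-step reduction. Invoking the classical fact that $\{T_v : v\in Q^{-1}(1)\}$ generates $O(V)$, it suffices to show $\tilde\kappa(W)$ acts transitively on $Q^{-1}(1)$: once every such $v$ equals $g(\alpha_s)$ for some $g\in\tilde\kappa(W)$ and some $s\in S$, the conjugate $T_v=gT_{\alpha_s}g^{-1}$ lies in $\tilde\kappa(W)$ and generation follows. This transitivity is the main obstacle, and it is precisely where the non-line-graph hypothesis must enter: by Proposition~\ref{lem5.4}, nondegenerate line graphs are exactly those for which Wu has already proved $\kappa(W)\cong S_n$, a group which fails to act transitively on $Q^{-1}(1)$, so the hypothesis is not cosmetic. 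My approach to transitivity would be combinatorial: represent $v=\sum_{t\in T}\alpha_t$ as a subset $T\subseteq S$ with $|T|+e(T)\equiv 1\pmod 2$, note that $T_{\alpha_s}$ realizes the toggle $T\mapsto T\triangle\{s\}$ exactly when $s$ has an odd number of neighbors in $T$, and induct on $|T|$ to reduce $v$ to some $\alpha_s$. The delicate case is a nonempty $T$ in which every $s\in S$ has an even number of neighbors in $T$; here the failure of the line-graph characterization in Lemma~\ref{lem5.2} (an induced claw, or a block that is not complete) should furnish a short detour producing an eligible vertex, and executing that case analysis cleanly is where I expect the real work to lie.
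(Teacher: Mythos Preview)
Your setup matches the paper exactly: the intertwining $\theta\circ\tau(w)=\kappa(w)\circ\theta$ and the identification $\tilde\kappa_s=T_{\alpha_s}$ are precisely how the paper transfers the problem to $V$. From that point on, however, the routes diverge, and the step you flag as ``where the real work lies'' is a genuine gap, not a routine verification.

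First, your ``delicate case'' is misstated. If every $s\in S$ had an even number of neighbours in $T$, then $v=\sum_{t\in T}\alpha_t$ would satisfy $B(v,\alpha_s)=0$ for all $s$ and hence lie in the radical, contradicting nondegeneracy. The actual obstruction is when every $s\in T$ has an even number of neighbours in $T$: then the only available toggles add a vertex outside $T$, so $|T|$ is not a valid induction parameter and you need some other well-founded measure. You gesture at using an induced claw or a non-complete block, but you give no mechanism connecting those local features to the given set $T$ in a way that lets you shrink it. That transitivity statement is essentially the full content of Brown--Humphries's theorem over $\mathbb F_2$, and it does not yield to a short case analysis; your conjugation reduction is sound but vacuous without it.

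The paper avoids this entirely by importing the Brown--Humphries machinery. With $P=\{\alpha_s:s\in S\}$ one has $Tv(P)=\tau(W)$ and $G(P)\cong\Gamma$. Lemma~\ref{lem2.3} puts $P$ in the $t$-equivalence class of some $P'$ with $G(P')$ a tree; Lemma~\ref{lem2.1} says the property ``$G(\,\cdot\,)$ is the line graph of a tree'' is a $t$-equivalence invariant, so if $G(P')$ were a path then $\Gamma$ would be a line graph, contrary to hypothesis; finally Lemma~\ref{lem2.5} uses nondegeneracy to show a non-path tree forces orthogonal type. Then Lemma~\ref{lem2.2} (Brown--Humphries) delivers both $Tv(P)=O(V)$ and the three orbits in one stroke, and $\theta$ carries everything to $V^\ast$. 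Thus the non-line-graph hypothesis enters not through a local claw argument on $T$, but through the global $t$-equivalence invariant ``line graph of a tree''.
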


Under the assumption that $B$ is nondegenerate, the number $|S|=2m$ is even and there exists a basis $\{\beta_1,\gamma_1,\ldots,\beta_m,\gamma_m\}$ of $V$ such that $B(\beta_i,\beta_j)=0$, $B(\gamma_i,\gamma_j)=0$ and
$$
B(\beta_i,\gamma_j)=
\left\{
\begin{array}{ll}
1 \qquad &\hbox{if $i=j$,}\\
0 \qquad &\hbox{else}
\end{array}
\right.
$$
for all $1\leq i,j\leq m$.
Such a basis $\{\beta_1,\gamma_1,\ldots,\beta_m,\gamma_m\}$ of $V$ is called a {\it symplectic basis} of $V$. The {\it Arf invariant} of $Q$ is defined to be
$$
{\rm Arf}(Q)=\sum_{i=1}^m Q(\beta_i) Q(\gamma_i),
$$
which is independent on the choice of the symplectic basis $\{\beta_1,\gamma_1,\ldots,\beta_m,\gamma_m\}$ of $V$ (for example see \cite{arf:41} or \cite[Theorem~13.13]{grove:02}). Any two quadratic forms over $\mathbb{F}_2$ are equivalent if and only if they have the same Arf invariant and the underlying spaces have the same dimension (for example see \cite{arf:41} or \cite[Proposition~13.14]{grove:02}). The order of $O(V)$ and the sizes of nontrivial $O(V)$-orbits on $V$ are given as follows (cf.  \cite[Chapter~14]{grove:02}). If ${\rm Arf}(Q)=0$ then
\begin{eqnarray*}
\big|O(V)\big|&=&
2^{m^2-m+1}(2^m-1)(2^2-1)(2^4-1)\cdots(2^{2m-2}-1),\\
\big|Q^{-1}(1)\big|&=&2^{2m-1}-2^{m-1},\\
\big|Q^{-1}(0)\setminus\{0\}\big|&=&2^{2m-1}+2^{m-1}-1.
\end{eqnarray*}
If ${\rm Arf}(Q)=1$ then
\begin{eqnarray*}
\big|O(V)\big|&=&
2^{m^2-m+1}(2^{m}+1)(2^2-1)(2^4-1)\cdots(2^{2m-2}-1),\\
\big|Q^{-1}(1)\big|&=&2^{2m-1}+2^{m-1},\\
\big|Q^{-1}(0)\setminus\{0\}\big|&=&2^{2m-1}-2^{m-1}-1.
\end{eqnarray*}





For each $s\in S$ there exists $\alpha_s^\vee\in V$ such that
\begin{eqnarray}\label{e10.5}
B(\alpha_s^\vee,\alpha_t)=\left\{
\begin{array}{ll}
1 \qquad &\hbox{if $s=t,$}\\
0 \qquad &\hbox{else}
\end{array}
\right.
\end{eqnarray}
for all $t\in S$. The set $\{\alpha_s^\vee~|~s\in S\}$ forms a basis of $V$ and is called  {\it the basis of $V$ dual to $\{\alpha_s~|~s\in S\}$ {\rm (}with respect to $B${\rm )}}.

\begin{thm}\label{thm1.3}
Assume that $\Gamma=(S,R)$ is a nondegenerate graph but not a line graph. Then $\Gamma$ is $2$-lit. Moreover the following are equivalent:
\begin{enumerate}
\item $\Gamma$ is $1$-lit.

\item The restriction of $Q$ to $\{\alpha_s^\vee~|~s\in S\}$ is surjective.
\end{enumerate}
\end{thm}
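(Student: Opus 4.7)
The plan is to translate the $k$-lit property, via Theorem~\ref{thm1.2} and the isomorphism $\theta$, into an assertion about which values of $Q$ are attained by sums of at most $k$ vectors $\alpha_s^\vee$. Comparing (\ref{e1.1}), (\ref{e1.5}) and (\ref{e10.5}) shows $\theta(\alpha_s^\vee)=f_s$, so $\sum_{s\in K}f_s$ lies in $\theta(Q^{-1}(c))$ if and only if $\sum_{s\in K}\alpha_s^\vee\in Q^{-1}(c)$. Consequently $\Gamma$ is $k$-lit precisely when, for each $c\in\{0,1\}$, some $K\subseteq S$ with $|K|\leq k$ (and $K\neq\emptyset$ when $c=0$) satisfies $\sum_{s\in K}\alpha_s^\vee\in Q^{-1}(c)$.

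With this reduction, the equivalence (i)$\Leftrightarrow$(ii) is essentially automatic: each $\alpha_s^\vee$ is nonzero, so (i) amounts to saying that the restriction of $Q$ to $\{\alpha_s^\vee\mid s\in S\}$ attains both values, which is (ii). For the $2$-lit claim I would set $q(s):=Q(\alpha_s^\vee)$ and $b(s,t):=B(\alpha_s^\vee,\alpha_t^\vee)$ and use (\ref{e2.2}) in the form $Q(\alpha_s^\vee+\alpha_t^\vee)=q(s)+q(t)+b(s,t)$. If (ii) holds, $\Gamma$ is already $1$-lit. If $q\equiv 0$ on $S$, singletons cover $Q^{-1}(0)\setminus\{0\}$, and some pair $\alpha_s^\vee+\alpha_t^\vee$ lands in $Q^{-1}(1)$ because the matrix $(b(s,t))$ is the inverse of the adjacency matrix, and so it is nonzero. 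The remaining case $q\equiv 1$ is where the real work sits: singletons cover $Q^{-1}(1)$, and I would need to find $s\neq t$ with $b(s,t)=0$ to land a pair in $Q^{-1}(0)\setminus\{0\}$.

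The main obstacle is ruling out the possibility that $b(s,t)=1$ for every $s\neq t$. Arguing by contradiction, in that situation $(b(s,t))$ equals $J-I$ (the diagonal vanishes since $B$ is alternating). Using $|S|$ even (forced by nondegeneracy of $B$), the identity $J^2=|S|J=0$ over $\mathbb{F}_2$ gives $(J-I)^2=I$, so the adjacency matrix is $A=(J-I)^{-1}=J-I$. Hence $\Gamma=K_{|S|}$, which is the line graph of the star $K_{1,|S|}$, contradicting the hypothesis that $\Gamma$ is not a line graph. This is the one place where the ``not a line graph'' assumption is invoked, and it is what makes the entire argument go through.
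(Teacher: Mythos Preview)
Your proposal is correct and follows essentially the same route as the paper's proof: reduce via $\theta(\alpha_s^\vee)=f_s$ and Theorem~\ref{thm1.2}, deduce (i)$\Leftrightarrow$(ii) immediately, and handle the $2$-lit claim by splitting into the cases $q\equiv 0$ and $q\equiv 1$. Your explicit computation $(J-I)^2=I$ over $\mathbb{F}_2$ (using $|S|$ even) is exactly the content behind the paper's terser line ``Using Lemma~\ref{lem10.3}, we deduce that $\Gamma^\vee$ is not a complete graph,'' and your appeal to ``not a line graph'' to exclude $\Gamma=K_{|S|}$ matches the paper's use of the same hypothesis.
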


When the nondegenerate graph $\Gamma$ is bipartite, Theorem~\ref{thm1.3} can be reduced as follows.

\newpage

\begin{cor}\label{cor1.1}
Assume that $\Gamma$ is a nondegenerate bipartite graph. Then $\Gamma$ is $2$-lit. Moreover the following are equivalent:
\begin{enumerate}
\item $\Gamma$ is $1$-lit

\item  $\Gamma$ contains a vertex with even degree or $\Gamma$ is a single edge.
\end{enumerate}
\end{cor}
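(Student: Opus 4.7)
The plan is to reduce to Theorem~\ref{thm1.3} after separately disposing of the case in which $\Gamma$ happens to be a line graph. Combining Proposition~\ref{lem5.4} with the bipartite (hence triangle-free) hypothesis, I would argue that a nondegenerate bipartite line graph must be the line graph of a path of odd order, hence is a path $P_{2k}$ with $k\ge 1$: indeed a tree with an internal vertex of degree $\ge 3$ produces a triangle in its line graph. Such a path has two leaves and so is $1$-lit by Wang--Wu \cite{xwykw:07}, hence also $2$-lit, and either $k=1$ (so $\Gamma$ is a single edge) or $k\ge 2$ (so $\Gamma$ has an internal vertex of degree $2$); in either case both (i) and (ii) of the corollary hold, so the equivalence is trivial.

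Now assume $\Gamma$ is not a line graph, so in particular $\Gamma\ne K_2$ and (ii) simplifies to ``$\Gamma$ contains a vertex of even degree''. Theorem~\ref{thm1.3} already yields the $2$-lit claim, and reduces the equivalence to proving that the restriction of $Q$ to $\{\alpha_s^\vee\mid s\in S\}$ is surjective onto $\mathbb{F}_2$ if and only if $\Gamma$ has a vertex of even degree. The key step is the identity
\[
Q(\alpha_s^\vee)\;=\;(A^{-1}\mathbf{1})_s \qquad\text{for every }s\in S,
\]
where $A$ denotes the $\mathbb{F}_2$-adjacency matrix and $\mathbf{1}$ denotes the all-ones vector. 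To prove it I would let $(X,Y)$ be the bipartition (necessarily $|X|=|Y|$ since $A$ is invertible), fix $s\in X$, and use the defining relation $B(\alpha_s^\vee,\alpha_t)=\delta_{st}$ to write $\alpha_s^\vee=\sum_t(A^{-1})_{ts}\alpha_t$. The block form $A=\bigl(\begin{smallmatrix}0 & B_0 \\ B_0^{T} & 0\end{smallmatrix}\bigr)$ makes the $X$-components of $A^{-1}e_s$ vanish, so $\alpha_s^\vee$ is supported on $\{\alpha_t : t\in Y\}$. Since $\Gamma$ is bipartite, no two of these basis vectors are joined by an edge, so the quadratic cross-terms of $Q(\alpha_s^\vee)$ disappear, leaving only $\sum_{t\in Y}(A^{-1})_{ts}=\mathbf{1}^{T}A^{-1}e_s=(A^{-1}\mathbf{1})_s$ (using $A^{T}=A$). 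The case $s\in Y$ is symmetric.

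With the formula in hand, let $d=A^{-1}\mathbf{1}$. The map $\alpha_s^\vee\mapsto d_s$ is surjective onto $\mathbb{F}_2$ precisely when $d\notin\{\mathbf{0},\mathbf{1}\}$. Automatically $d\ne\mathbf{0}$ since otherwise $\mathbf{1}=A\mathbf{0}=\mathbf{0}$, and $d\ne\mathbf{1}$ is equivalent to $A\mathbf{1}\ne\mathbf{1}$; since $(A\mathbf{1})_t\equiv\deg(t)\pmod 2$, this is precisely the condition that some vertex of $\Gamma$ has even degree, matching~(ii). I expect the main obstacle to be the middle step---the identity $Q(\alpha_s^\vee)=(A^{-1}\mathbf{1})_s$---because it crucially uses both halves of the bipartite structure (absence of $Y$-$Y$ edges to kill the quadratic term, and equal part sizes so that $A$ is invertible at all) in order to collapse the quadratic form $Q$ to a simple linear functional.
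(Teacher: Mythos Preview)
Your argument is correct. The line-graph reduction matches the paper's, and your core identity $Q(\alpha_s^\vee)=(A^{-1}\mathbf 1)_s$ is valid: writing $\alpha_s^\vee=\sum_t (A^{-1})_{ts}\alpha_t$ and using the block structure of $A$ (which forces $|X|=|Y|$ and makes $A^{-1}$ block off-diagonal as well) does kill all $B$-cross-terms, collapsing $Q$ to the claimed linear sum. The surjectivity analysis via $d=A^{-1}\mathbf 1$ then gives both directions at once.

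The paper's proof is organized differently, though the underlying computation is close. Rather than working with $A^{-1}$ directly, the paper introduces the \emph{dual graph} $\Gamma^\vee$ whose adjacency matrix is $A^{-1}$ (Lemma~\ref{lem10.3}), observes that $\Gamma^\vee$ is bipartite with the same bipartition as $\Gamma$, and then treats the two implications separately: for (ii)$\Rightarrow$(i) it invokes an auxiliary Lemma~\ref{lem9.1} (which uses an even-degree vertex $s$ together with the vanishing of $B(\alpha_u^\vee,\alpha_v^\vee)$ among neighbors of $s$ to force both values of $Q$ to occur among the $\alpha_t^\vee$), while for (i)$\Rightarrow$(ii) it computes $Q(\alpha_s^\vee)\equiv\deg_{\Gamma^\vee}(s)\pmod 2$ via Lemma~\ref{lem10.4} and shows this is always $1$ when every degree in $\Gamma$ is odd. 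Your formula $Q(\alpha_s^\vee)=(A^{-1}\mathbf 1)_s$ is exactly this degree-in-$\Gamma^\vee$ statement, and your uniform treatment via $d\notin\{\mathbf 0,\mathbf 1\}$ bypasses Lemma~\ref{lem9.1} entirely. The paper's route has the advantage of building reusable structural machinery (the dual graph), while yours is more self-contained and handles both directions with a single identity.
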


As consequences of Corollary~\ref{cor1.1}, we obtain two families of $1$-lit graphs as follows.

\begin{enumerate}
\item[$\bullet$] A tree is nondegenerate if and only if it has a perfect matching. By \cite[Lemma~2.4]{eigen_nondegtree:02}, a tree with a perfect matching satisfies Corollary~\ref{cor1.1}(ii) and is therefore $1$-lit (cf. \cite[Theorem~1.1]{hw:11-1}).
\item[$\bullet$] For any two positive integers $m$ and $n$, the $m\times n$ grid is nondegenerate if and only if $m+1$ and $n+1$ are coprime \cite{ks:90}. By Corollary~\ref{cor1.1} any such $m\times n$ gird is $1$-lit.
\end{enumerate}

The following example shows that Corollary~\ref{cor1.1} is no longer true if the assumption of $\Gamma$ is the same as that of Theorem~\ref{thm1.3}.  Consider the graph $\Gamma=(S,R)$ as below.

\psset{unit=2cm}
\begin{pspicture}(0,0)(8,1.9)
\psset{linewidth=1pt}
\psline{*-*}(3.00,0.3)(3.00,1.2)
\psline{*-*}(3.00,1.2)(5.00,1.2)
\psline{*-*}(3.00,1.2)(3.50,0.75)
\psline{*-*}(3.00,0.3)(3.50,0.75)
\psline{*-*}(5.00,1.2)(4.50,0.75)
\psline{*-*}(5.00,0.3)(4.50,0.75)
\psline{*-*}(5.00,0.3)(5.00,1.2)
\psline{*-*}(3.00,0.3)(5.00,0.3)
\psline{*-*}(3.50,0.75)(4.50,0.75)
\pscurve(5.00,1.2)(2.50,1.4)(3.00,0.3)

\rput(3.00,0.2){{\scriptsize $2$}}
\rput(3.00,1.3){{\scriptsize $1$}}
\rput(3.58,0.82){{\scriptsize $3$}}
\rput(4.40,0.82){{\scriptsize $6$}}
\rput(5.00,0.2){{\scriptsize $4$}}
\rput(5.00,1.3){{\scriptsize $5$}}
\end{pspicture}

\noindent The graph $\Gamma=(S,R)$ is nondegenerate and not a block graph. Therefore $\Gamma$ is not a line graph by Proposition~\ref{lem5.4}. The basis $\{\alpha_1^\vee,\alpha_2^\vee,\ldots,\alpha_6^\vee\}$ of $V$ dual to $\{\alpha_1,\alpha_2,\ldots,\alpha_6\}$ can be expressed as follows.
$$
\begin{array}{ll}
\alpha_1^\vee=\alpha_2+\alpha_6, \qquad \quad
&\alpha_4^\vee=\alpha_3+\alpha_5,\\
\alpha_2^\vee=\alpha_1+\alpha_3+\alpha_5+\alpha_6,\qquad \quad
&\alpha_5^\vee=\alpha_2+\alpha_3+\alpha_4+\alpha_6,\\
\alpha_3^\vee=\alpha_2+\alpha_4+\alpha_5,\qquad \quad
&\alpha_6^\vee=\alpha_1+\alpha_2+\alpha_5.
\end{array}
$$
Using (\ref{e2.2}) and $Q(\alpha_s)=1$ for all $s\in S$, we deduce that $Q(\alpha_s^\vee)=0$ for all $s\in S.$ Therefore $\Gamma$ is not $1$-lit by Theorem~\ref{thm1.3}, but the vertices $2,5$ have even degree in $\Gamma$.

\section{Proof of Theorem~\ref{thm1.2}}

For $\alpha\in V$ the {\it transvection on $V$ with direction $\alpha$} is a linear transformation  $\tau_\alpha:V\to V$ defined by
\begin{eqnarray*}
\tau_\alpha \beta = \beta+B(\beta,\alpha)\alpha \qquad \quad \hbox{for all $\beta\in V.$}
\end{eqnarray*}
Observe that $\tau_\alpha$ preserves the form $B$ and that $\tau_\alpha\in {\rm GL}(V)$ since $\tau_\alpha^2=1$. Here $1$ denotes the identity map on $V.$

For a subset $P$ of $V$ define $Tv(P)$ to be the subgroup of ${\rm GL}(V)$ generated by $\tau_\alpha$ for $\alpha\in P,$ and define $G(P)$ to be the simple graph whose vertex set is $P$ and where $\alpha,\beta$ in $P$ form an edge if and only if $B(\alpha,\beta)=1.$ For any two linearly independent sets $P$ and $P'$ of $V$, we say that $P'$ is {\it elementary $t$-equivalent} to $P$ whenever there exist $\alpha,\beta\in P$ such that $P'$ is obtained from $P$ by changing $\beta$ to $\tau_\alpha \beta.$ The equivalence relation generated by the elementary $t$-equivalence relation is called the {\it $t$-equivalence relation} \cite{brohum:86-1}.

\begin{lem}\label{lem2.3}
{\rm \cite[Theorem 3.3]{brohum:86-1}.}
Let $P$ denote a linearly independent set of $V$. Assume that $G(P)$ is a connected graph. Then there exists $P'$ in $t$-equivalence class of $P$ for which $G(P')$ is a tree.
\end{lem}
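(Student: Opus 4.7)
My plan is to induct on $e(P) := |E(G(P))| - (|P|-1)$, the number of edges in $G(P)$ beyond a spanning tree; this is nonnegative since $G(P)$ is connected, and $e(P)=0$ iff $G(P)$ is a tree, which handles the base case by taking $P' = P$.

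For the inductive step I aim to exhibit a short sequence of elementary $t$-equivalences $P \to P^*$ that keeps $G(P^*)$ connected while making $e(P^*)<e(P)$; the inductive hypothesis then finishes. The pivotal computation is that if I replace $\beta\in P$ by $\beta' := \tau_\alpha\beta = \beta+\alpha$ (where $\{\alpha,\beta\}$ is an edge, so $B(\alpha,\beta)=1$), then $\{\alpha,\beta'\}$ is still an edge since $B(\alpha,\beta+\alpha)=1+0=1$, and for each other $\gamma\in P$ the pair $\{\beta',\gamma\}$ is an edge of the new graph iff $\gamma$ was adjacent to exactly one of $\alpha,\beta$ in $G(P)$ (a symmetric-difference rule on neighbourhoods). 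In particular, when $\{\alpha,\beta,\gamma\}$ is a triangle of $G(P)$, the move $\beta \mapsto \beta+\gamma$ destroys the edge $\{\alpha,\beta\}$ because $B(\alpha,\beta+\gamma)=1+1=0$, while $\{\alpha,\gamma\}$ and $\{\beta+\gamma,\gamma\}$ are preserved; any path through the destroyed edge can be rerouted through $\gamma$, so connectedness is maintained locally.

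Two subtasks remain: locating a triangle, and bounding the side-effects. If $G(P)$ contains no triangle but does have a cycle, take a shortest cycle $\alpha_1-\alpha_2-\cdots-\alpha_k-\alpha_1$ with $k\geq 4$; the move $\alpha_2 \mapsto \alpha_2+\alpha_1$ produces the triangle $\{\alpha_1, \alpha_2+\alpha_1, \alpha_3\}$, since by the symmetric-difference rule $B(\alpha_2+\alpha_1,\alpha_3) = B(\alpha_2,\alpha_3)+B(\alpha_1,\alpha_3) = 1+0 = 1$, reducing to the triangle case. The main obstacle I foresee is controlling the side-effects: the symmetric-difference rule may introduce new edges at $\beta'$ involving vertices outside the triangle, so a single triangle move need not strictly decrease the total edge count. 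My approach would be to choose the triangle and the vertex to modify with care---for example picking a triangle vertex $\gamma$ of minimum degree to limit the creation of new edges at $\beta'$, or refining the induction to a lexicographic complexity pair (edge count first, with a secondary statistic such as the total degree of the triangle's third vertex as tiebreaker)---to guarantee strict decrease. This delicate selection is, I expect, the heart of Brouwer and Humphreys's argument for their Theorem~3.3.
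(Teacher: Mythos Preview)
The paper does not prove this lemma; it is quoted from Brown and Humphries (note the authors' names---not ``Brouwer and Humphreys''), so there is no in-paper argument to compare against. Your proposal therefore has to stand on its own, and it does not: you explicitly flag the decisive step---choosing the triangle move so that $e(P)$ strictly drops---as something you \emph{would} do, without doing it. That step is the entire content of the lemma, and the difficulty is genuine rather than cosmetic.

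Here is a concrete obstruction to the scheme as stated. Take a triangle $\{\alpha,\beta,\gamma\}$ and attach $p$, $q$, $r$ pendant leaves to $\alpha$, $\beta$, $\gamma$ respectively, with no other edges. Your symmetric-difference rule gives that replacing $\beta$ by $\tau_\gamma\beta$ changes the edge count by exactly $r-1$ (one triangle edge is lost, $r$ edges to the leaves $c_k$ are gained); by symmetry the six triangle moves change $|E|$ by one of $p-1$, $q-1$, $r-1$. The remaining elementary moves either leave the graph isomorphic (replacing a triangle vertex along a pendant edge) or increase $|E|$ further (replacing a leaf). Thus for $p=q=r=2$ the graph has $e(P)=1$, yet \emph{every} elementary $t$-equivalence leaves $e$ unchanged or raises it---so induction on $e(P)$ alone cannot proceed, regardless of which vertex you pick. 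Your proposed remedies (minimum-degree choice, a lexicographic secondary invariant) may ultimately work, but nothing in the proposal shows that any particular secondary statistic strictly decreases along some move from this configuration; after one move the local picture reshapes (new triangles appear through the former leaves), and you would have to track your invariant through such changes. As written, the proposal correctly isolates the crux of the Brown--Humphries argument and then defers precisely that crux.
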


\begin{lem}\label{lem2.1}
{\rm \cite[Lemma 3.7]{hw:11-2}}.
Let $P$ denote a linearly independent set of $V$. Assume that $G(P)$ is the line graph of a tree. Then, for each $P'$ in the $t$-equivalence class of $P$,  the graph $G(P')$ is the line graph of a tree.
\end{lem}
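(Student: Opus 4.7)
The plan is to induct on the length of a chain of elementary $t$-equivalences from $P$ to $P'$, reducing the problem to the case of a single elementary move $P\mapsto P'' = (P\setminus\{\beta\})\cup\{\tau_\alpha\beta\}$ with $\alpha,\beta\in P$. If $B(\alpha,\beta)=0$ then $\tau_\alpha\beta=\beta$ and $P''=P$, so assume $B(\alpha,\beta)=1$. By hypothesis $G(P)\cong L(T)$ for some tree $T$; identify each $\gamma\in P$ with its corresponding edge $e_\gamma\in E(T)$. Since $B(\alpha,\beta)=1$, the edges $e_\alpha$ and $e_\beta$ share exactly one endpoint of $T$; write $e_\alpha=\{u,v\}$ and $e_\beta=\{v,w\}$, necessarily with $u\neq w$.

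The central construction is an edge slide on $T$: let $T'$ be obtained from $T$ by deleting $e_\beta$ and inserting the new edge $e'=\{u,w\}$. Then $T'$ is again a tree, since removing $e_\beta$ splits $T$ into two components (one containing $u$, the other containing $w$), which $\{u,w\}$ reconnects; note that $\{u,w\}\notin E(T)$, for otherwise $e_\alpha$, $e_\beta$ and $\{u,w\}$ would form a $3$-cycle in $T$. I would then claim that the map $\varphi\colon P''\to E(T')$ defined by $\varphi(\gamma)=e_\gamma$ for $\gamma\in P\setminus\{\beta\}$ and $\varphi(\tau_\alpha\beta)=e'$ induces an isomorphism $G(P'')\cong L(T')$, which completes the inductive step.

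Verifying the isomorphism $\varphi$ is the routine part. Adjacencies among $\gamma,\delta\in P\setminus\{\beta\}$ are unaffected, both in $G$ and in passing from $L(T)$ to $L(T')$, since the underlying edges of $T$ are unchanged. For the new vertex $\tau_\alpha\beta$ and any $\gamma\in P\setminus\{\alpha,\beta\}$, bilinearity and $B(\alpha,\beta)=1$ yield $B(\gamma,\tau_\alpha\beta)=B(\gamma,\beta)+B(\gamma,\alpha)$, and a short case analysis according to which of $u,v,w$ is an endpoint of $e_\gamma$ shows this equals $1$ exactly when $e_\gamma$ is incident with $e'$ in $T'$; the edge between $\alpha$ and $\tau_\alpha\beta$ is immediate from the shared endpoint $u$ of $e_\alpha$ and $e'$. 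The main point requiring care is keeping this case analysis tidy, which is feasible because $T$ is a tree: no two edges of $T$ among $e_\alpha,e_\beta,e_\gamma$ can share more than one endpoint, and no $3$-cycle can form on $\{u,v,w\}$.
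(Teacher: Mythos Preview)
The paper does not supply its own proof of this lemma; it is quoted without argument from \cite[Lemma~3.7]{hw:11-2}. Your proof is self-contained and correct. The reduction to a single elementary move is forced by the definition of $t$-equivalence, and the edge-slide $T\mapsto T'$ (delete $e_\beta=\{v,w\}$, insert $e'=\{u,w\}$) is exactly the right combinatorial model for the algebraic replacement $\beta\mapsto\tau_\alpha\beta=\alpha+\beta$. Your verification that $\varphi$ is a graph isomorphism $G(P'')\to L(T')$ is complete: since $T$ is a tree and $\gamma\notin\{\alpha,\beta\}$, the edge $e_\gamma$ can meet $\{u,v,w\}$ in at most one vertex, so the four cases (endpoint $u$, endpoint $v$, endpoint $w$, or none) are exhaustive, and in each case $B(\gamma,\alpha)+B(\gamma,\beta)$ agrees with incidence of $e_\gamma$ and $e'$ in $T'$. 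This edge-slide argument is in fact the standard proof of the cited result.
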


A basis $P$ of $V$ is said to have {\it orthogonal type} \cite{brohum:86-2} if $P$ is $t$-equivalent to some $P'$ for which $G(P')$ is a tree containing the graph

\psset{unit=2cm}
\begin{pspicture}(0,0)(8,1.1)
\psset{linewidth=1pt}

\psline(3.75,0.3)(3.75,0.8)
\psline(2.75,0.3)(4.75,0.3)
\pscircle*(2.75,0.3){0.04}
\pscircle*(3.25,0.3){0.04}
\pscircle*(3.75,0.3){0.04}
\pscircle*(4.25,0.3){0.04}
\pscircle*(4.75,0.3){0.04}
\pscircle*(3.75,0.8){0.04}
\end{pspicture}

\noindent as a subgraph.

\begin{lem}\label{lem2.5}
Assume that $P$ is a basis of $V$ for which $G(P)$ is a tree but not a path. Then $P$ is of orthogonal type.
\end{lem}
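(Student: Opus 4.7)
The plan is to show that $G(P)$ itself already contains the displayed $E_6$-diagram as a subgraph, so that the witness $P' = P$ suffices in the definition of orthogonal type.

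First I would note that, since $B$ is nondegenerate on $V$ and $P$ is a basis of $V$, the Gram matrix $\bigl(B(\alpha,\beta)\bigr)_{\alpha,\beta\in P}$ is invertible over $\mathbb{F}_2$. By the defining property of $G(P)$ recalled just before Lemma~\ref{lem2.3}, this Gram matrix is exactly the adjacency matrix of $G(P)$, so $G(P)$ has an invertible adjacency matrix over $\mathbb{F}_2$.

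From this I would extract the key structural constraint: no vertex of $G(P)$ has two distinct leaf-neighbors. Indeed, if $a$ and $b$ were leaves of $G(P)$ each adjacent only to some common vertex $v$, then the rows of the adjacency matrix indexed by $a$ and $b$ would both equal the indicator row of $v$, contradicting invertibility.

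Next I would use the hypothesis that $G(P)$ is a tree but not a path to choose a vertex $v$ of $G(P)$ of degree at least $3$. The structural constraint forces at most one of the $\geq 3$ neighbors of $v$ to be a leaf, so $v$ has at least two non-leaf neighbors. Each non-leaf neighbor $u$ of $v$ has in turn a neighbor in $G(P)$ distinct from $v$, so the branch of $G(P)$ at $v$ through $u$ has length at least $2$. Two such branches of length $\geq 2$ at $v$, together with any third neighbor of $v$, exhibit a subgraph of $G(P)$ isomorphic to the $E_6$ diagram pictured just above the statement. Therefore $P$ is of orthogonal type with $P' = P$.

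The only point requiring verification is the structural constraint that an invertible $\mathbb{F}_2$-adjacency matrix forbids two leaves at a common vertex; once that is in place, the remainder is a direct tree-theoretic observation and no explicit $t$-equivalence sequence needs to be constructed, which makes this lemma considerably easier to prove than a naive reading of the ``orthogonal type'' definition might suggest.
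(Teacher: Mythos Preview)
Your proof is correct and follows essentially the same approach as the paper: both arguments observe that nondegeneracy forbids a branch vertex from having two leaf neighbors (the paper phrases this as ``$\beta+\gamma$ lies in the radical of $V$'' rather than ``two equal rows in the adjacency matrix,'' but these are the same fact), and then take $P'=P$ since $G(P)$ already contains the required subgraph. You spell out the final subgraph-extraction step more explicitly than the paper, which simply asserts it.
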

\begin{proof}
Since $G(P)$ is not a path it contains a vertex $\alpha$ with degree at least three. If any two neighbors of $\alpha,$ say $\beta$ and $\gamma$, are leaves of $G(P)$, then $\beta+\gamma$ lies in the radical of $V$, which contradicts that $B$ is nondegenerate. Therefore at most one neighbor of $\alpha$ is a leaf in $G(P)$ and so $P$ is of orthogonal type.
\end{proof}

\begin{lem}\label{lem2.2}
{\rm \cite[Section 10]{brohum:86-2}.}
Let $P$ denote a basis of $V$ which is of orthogonal type.
Then $Tv(P)$ is the orthogonal group with respect to $Q_P.$ Moreover the $Tv(P)$-orbits on $V$ are
\begin{gather*}
\{0\}, \qquad Q_P^{-1}(0)\setminus\{0\}, \qquad Q_P^{-1}(1).
\end{gather*}
\end{lem}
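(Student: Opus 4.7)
The plan is to prove Lemma~\ref{lem2.2} in four stages: establishing that $Tv(P) \subseteq O(V,Q_P)$, showing $t$-equivalence invariance so one may assume $G(P)$ has the canonical shape, proving the reverse containment $O(V,Q_P) \subseteq Tv(P)$, and finally reading off the orbit decomposition.

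For the first inclusion I would check each generator preserves $Q_P$ by direct computation. Fix $\alpha \in P$, so $Q_P(\alpha)=1$, and take $\beta \in V$. If $B(\beta,\alpha)=0$ then $\tau_\alpha\beta = \beta$. If $B(\beta,\alpha)=1$ then $\tau_\alpha\beta = \beta + \alpha$, and (\ref{e2.2}) gives
\begin{gather*}
Q_P(\beta+\alpha) = Q_P(\beta)+Q_P(\alpha)+B(\beta,\alpha) = Q_P(\beta) + 1 + 1 = Q_P(\beta).
\end{gather*}
Hence $\tau_\alpha \in O(V,Q_P)$ for each $\alpha \in P$, so $Tv(P) \subseteq O(V,Q_P)$.

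Next I would verify that $Tv(P)$ and $Q_P$ are both constant on the $t$-equivalence class of $P$. If $P' = (P \setminus \{\beta\}) \cup \{\tau_\alpha\beta\}$ with $\alpha,\beta\in P$, then the identity $\tau_{\tau_\alpha\beta} = \tau_\alpha\tau_\beta\tau_\alpha$ together with $\tau_\alpha \in Tv(P)\cap Tv(P')$ gives $Tv(P)=Tv(P')$. The preceding computation shows $Q_P(\tau_\alpha\beta)=1=Q_{P'}(\tau_\alpha\beta)$, so the two quadratic forms agree on the basis $P'$ and hence are equal. Iterating, I may replace $P$ by any $t$-equivalent basis; since $P$ is of orthogonal type, I therefore assume $G(P)$ is a tree containing the branched subgraph pictured.

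For the reverse containment I would invoke the classical theorem of Dieudonn\'e that, in any characteristic, the full orthogonal group $O(V,Q_P)$ is generated by orthogonal transvections, i.e., those $\tau_\gamma$ with $Q_P(\gamma)=1$. Because $\tau_{g\gamma} = g\tau_\gamma g^{-1}$ for each $g \in O(V,Q_P)$, it suffices to prove that $Tv(P)$ acts transitively on $Q_P^{-1}(1)$: every anisotropic $\gamma$ would then equal $g\alpha$ for some $\alpha \in P$ and $g \in Tv(P)$, whence $\tau_\gamma = g\tau_\alpha g^{-1} \in Tv(P)$. This transitivity step is what I expect to be the main obstacle. My plan is induction on $|P|$: the distinguished branch vertex of the canonical subgraph, together with the extra leaf attached to it, makes it possible to single out a hyperbolic plane inside $V$ whose orthogonal complement has a basis still satisfying the orthogonal-type condition, so that the inductive hypothesis applies there; the extra leaf is precisely what supplies the room needed to steer an arbitrary anisotropic vector into that orthogonal complement using a product of generators of $Tv(P)$.

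Granted the equality $Tv(P) = O(V,Q_P)$, the orbit description is immediate. The origin is fixed by every linear map. For the remaining orbits I would appeal to Witt's extension theorem for $\mathbb{F}_2$-quadratic forms: any isometry between one-dimensional subspaces of the nondegenerate space $(V,Q_P)$ extends to a global isometry. Applied in turn to two nonzero singular vectors and to two anisotropic vectors, this yields transitivity of $O(V,Q_P)$ on $Q_P^{-1}(0)\setminus\{0\}$ and on $Q_P^{-1}(1)$, giving exactly the three stated orbits.
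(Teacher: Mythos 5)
The paper does not prove this lemma at all; it is quoted verbatim from Brown and Humphries \cite[Section 10]{brohum:86-2}, so there is no internal argument to compare against. Judged on its own terms, your proposal is sound in its easy parts but has a genuine gap exactly where the real content of the lemma lies. Your Stage 1 (each $\tau_\alpha$ with $\alpha\in P$ preserves $Q_P$) and Stage 2 (invariance of $Tv(P)$ and of $Q_P$ under $t$-equivalence, via $\tau_{\tau_\alpha\beta}=\tau_\alpha\tau_\beta\tau_\alpha$) are correct and complete, and your reduction of the reverse inclusion to transitivity of $Tv(P)$ on $Q_P^{-1}(1)$ via $\tau_{g\gamma}=g\tau_\gamma g^{-1}$ is the right move. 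But that transitivity statement \emph{is} the theorem: it is equivalent to saying that the mere $|P|=\dim V$ transvections along a basis already generate the full orthogonal group, and you explicitly leave it as a plan (``induction on $|P|$, split off a hyperbolic plane'') without executing it. The plan hides the hard points: one must show the complement of the chosen hyperbolic plane inherits a spanning set that is still of orthogonal type (the orthogonal-type condition is a global property of the $t$-equivalence class, not obviously stable under deleting vertices), one must actually steer an arbitrary anisotropic vector into that complement, and one must handle the base of the induction, which sits near the dimension where the theory degenerates. Until this is done, nothing beyond the containment $Tv(P)\subseteq O(V)$ is established, and the orbit description (which you correctly derive from $Tv(P)=O(V)$ plus Witt extension) is also unproved.

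A secondary inaccuracy: the generation theorem you attribute to Dieudonn\'e is false as stated ``in any characteristic'' without exception. The orthogonal group $O_4^{+}(\mathbb{F}_2)$ (dimension $4$, Arf invariant $0$) is \emph{not} generated by its orthogonal transvections; the transvections generate an index-two subgroup of order $36$ in a group of order $72$. This does not sink your argument, because a basis of orthogonal type forces $\dim V\geq 6$ (the required subgraph has six vertices), but the exception must be acknowledged, both when you invoke the theorem and when you set up any induction that could pass through dimension $4$.
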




{\it Proof of Theorem~\ref{thm1.2}.}
For each $s\in S$ let $\tau_s$ denote the transvection on $V$ with direction $\alpha_s.$ By \cite[Section 5]{rd:05}, there exists a unique representation $\tau=\tau_{\it \Gamma}:W\rightarrow {\rm GL}(V)$ such that $\tau(s)=\tau_s$ for all $s\in S.$ For each $w\in W$ the transpose of $\tau(w^{-1})$ is equal to $\kappa(w)$. Therefore $\kappa$ is the dual representation of $\tau$. Since $\tau$ preserves the form $B$ we have
\begin{gather}\label{equi_rep}
\theta\circ\tau(w)=\kappa(w)\circ \theta \qquad  \qquad \hbox{for all $w\in W.$}
\end{gather}
Let $P=\{\alpha_s~|~s\in S\}$. Clearly $Tv(P)=\tau(W)$ and $G(P)$ is (isomorphic to) $\Gamma$. By Lemma~\ref{lem2.3} there exists $P'$ in $t$-equivalence class of $P$ for which $G(P')$ is a tree. Since $G(P)$ is not a line graph, the tree $G(P')$ is not a path by Lemma~\ref{lem2.1}. By Lemma~\ref{lem2.5} the basis $P'$ of $V$, as well as $P$, is of orthogonal type. By Lemma~\ref{lem2.2}, the group $\tau(W)=O(V)$ and the $\tau(W)$-orbits on $V$ are $\{0\},$  $Q^{-1}(0)\setminus\{0\},$ and $Q^{-1}(1).$ Applying (\ref{equi_rep}) and since $\theta$ is an isomorphism by Lemma~\ref{lem4.2}, the result follows.  \hfill $\Box$


\section{Proofs of Theorem~\ref{thm1.3} and its corollary}

To prove Theorem~\ref{thm1.3} and Corollary~\ref{cor1.1}, we introduce a simple graph which includes the information of the values $B(\alpha^\vee_s,\alpha^\vee_t)$ for all $s,t\in S.$

\begin{defn}\label{defn10.1}
We define $R^\vee$ as the set consisting of all two-element subsets $\{s,t\}$ of $S$ with $B(\alpha_s^\vee,\alpha_t^\vee)=1.$ Define $\Gamma^\vee$ as the simple graph with vertex set $S$  and edge set $R^\vee.$ We will refer to $\Gamma^\vee$ as the {\it dual graph} of $\Gamma.$ 
\end{defn}

Note that the notion of dual graphs defined above is different from the usual ones in graph theory. The following lemma suggests why the graph $\Gamma^\vee$ is of interest.


%

\begin{lem}\label{lem10.1}
For each $s\in S$ we have $\theta(\alpha_s^\vee)=f_s.$
\end{lem}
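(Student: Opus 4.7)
The proof is essentially an unwinding of definitions, so the plan is just to verify the claim by evaluating both sides on the basis $\{\alpha_t \mid t \in S\}$ of $V$.

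First, for any fixed $s \in S$ and any $t \in S$, I would compute $\theta(\alpha_s^\vee)(\alpha_t)$ using the definition of $\theta$ from (\ref{e1.5}):
\begin{equation*}
\theta(\alpha_s^\vee)(\alpha_t) = B(\alpha_s^\vee, \alpha_t).
\end{equation*}
By the defining property (\ref{e10.5}) of the dual basis, this expression equals $1$ when $s = t$ and $0$ otherwise. That is exactly the value $f_s(\alpha_t)$ specified by (\ref{e1.1}).

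Thus $\theta(\alpha_s^\vee)$ and $f_s$ are elements of $V^\ast$ that agree on every basis vector $\alpha_t$, so they coincide as linear functionals on $V$. No obstacles arise: the only content is a direct comparison between the definitions of $\theta$, the dual basis $\{\alpha_s^\vee\}$ with respect to $B$, and the dual basis $\{f_s\}$ with respect to the pairing $V^\ast \times V \to \mathbb{F}_2$.
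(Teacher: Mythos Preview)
Your proof is correct and follows essentially the same approach as the paper: both compute $\theta(\alpha_s^\vee)(\alpha_t)=B(\alpha_s^\vee,\alpha_t)$ via (\ref{e1.5}), invoke (\ref{e10.5}) to see this is $1$ iff $s=t$, and compare with (\ref{e1.1}).
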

\begin{proof}
Let $s,t\in S$ be given. Using (\ref{e1.5}) and (\ref{e10.5}) we have $\theta(\alpha_s^\vee)\alpha_t=1$ whenever $s=t$ and otherwise $\theta(\alpha_s^\vee)\alpha_t=0.$ Comparing this with (\ref{e1.1}) the result follows.
\end{proof}

Recall from Section~\ref{Preliminaries} that the symplectic form $B$ is defined on the basis $\{\alpha_s~|~s\in S\}$ of $V$. If the symplectic form associated with $\Gamma^\vee$ is defined on the basis $\{\alpha_s^\vee~|~s\in S\}$ of $V$, then the resulting form is $B$. Therefore $\Gamma^\vee$ is a nondegenerate graph. The dual graph of $\Gamma^\vee$ is $\Gamma$ since $\{\alpha_s~|~s\in S\}$ is the basis of $V$ dual to $\{\alpha_s^\vee~|~s\in S\}.$

\begin{lem}\label{lem10.2}
For each $s\in S$ we have
\begin{eqnarray}\label{e10.1}
\alpha_s=\sum_{st\in R}\alpha_t^\vee.
\end{eqnarray}
\end{lem}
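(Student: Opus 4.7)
The plan is to verify the identity by pairing both sides with every basis vector $\alpha_u$ under $B$ and then invoking nondegeneracy. Since $\{\alpha_u\mid u\in S\}$ is a basis of $V$ and $B$ is nondegenerate (Lemma~\ref{lem4.2}), two vectors $\alpha,\beta\in V$ coincide if and only if $B(\alpha,\alpha_u)=B(\beta,\alpha_u)$ for every $u\in S$. So it suffices to compute both sides against each $\alpha_u$.

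For the left-hand side, the defining formula (\ref{e1.2}) gives
$$
B(\alpha_s,\alpha_u)=\begin{cases}1 & \text{if } su\in R,\\ 0 & \text{else.}\end{cases}
$$
For the right-hand side, bilinearity of $B$ and the defining property (\ref{e10.5}) of the dual basis yield
$$
B\Bigl(\sum_{st\in R}\alpha_t^\vee,\alpha_u\Bigr)=\sum_{st\in R}B(\alpha_t^\vee,\alpha_u)=\sum_{st\in R}\delta_{t,u},
$$
which equals $1$ precisely when $u$ is a neighbor of $s$ in $\Gamma$, i.e.\ when $su\in R$, and equals $0$ otherwise. The two expressions therefore agree for every $u\in S$, and so by nondegeneracy $\alpha_s=\sum_{st\in R}\alpha_t^\vee$.

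There is no real obstacle here; the statement is essentially the matrix identity $AA^{-1}=I$ expressed basis-freely, with $A$ the adjacency matrix of $\Gamma$ (invertible by Lemma~\ref{lem4.2}) playing the role of the change-of-basis matrix between $\{\alpha_s\}$ and $\{\alpha_s^\vee\}$. The pairing argument above is just the cleanest way to package this calculation without introducing coordinates.
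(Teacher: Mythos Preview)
Your proof is correct and is essentially the same argument as the paper's: the paper applies $\theta$ to both sides, shows $\theta(\alpha_s)=\sum_{st\in R}f_t=\theta\bigl(\sum_{st\in R}\alpha_t^\vee\bigr)$ via Lemma~\ref{lem10.1}, and concludes by injectivity of $\theta$; since $\theta(\alpha)(\alpha_u)=B(\alpha,\alpha_u)$, this is exactly your pairing computation written in different notation. The only cosmetic difference is that the paper packages the right-hand calculation through Lemma~\ref{lem10.1} and the dual basis $\{f_t\}$, whereas you unwind it directly with~(\ref{e10.5}).
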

\begin{proof}
Fix $s\in S$. By (\ref{e1.1}), (\ref{e1.2}) and (\ref{e1.5}),  we find that the vector $\theta(\alpha_s)$ equals
\begin{eqnarray}\label{e10.6}
\sum_{st\in R}f_t.
\end{eqnarray}
By Lemma~\ref{lem10.1}, we find that (\ref{e10.6}) equals
$$
\theta\bigg(\sum_{st\in R}\alpha_t^\vee\bigg).
$$
Therefore (\ref{e10.1}) holds by Lemma~\ref{lem4.2}(ii).
\end{proof}

By duality and Lemma~\ref{lem10.2} the following lemma is straightforward.

\begin{lem}\label{lem10.4}
For each $s\in S$ we have
\begin{eqnarray}\label{e10.2}
\alpha_s^\vee=\sum_{st\in R^\vee}\alpha_t.
\end{eqnarray}
\end{lem}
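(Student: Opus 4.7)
The plan is to invoke Lemma~\ref{lem10.2} but with the roles of $\Gamma$ and $\Gamma^\vee$ swapped. The paragraph preceding Lemma~\ref{lem10.2} establishes exactly the two facts needed to make this switch rigorous: first, that $\Gamma^\vee$ is itself a nondegenerate graph (so Lemma~\ref{lem4.2} applies to it), and second, that the dual graph of $\Gamma^\vee$ is $\Gamma$, with $\{\alpha_s \mid s\in S\}$ serving as the basis of $V$ dual to $\{\alpha_s^\vee \mid s\in S\}$ with respect to $B$.

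With this set-up in hand, I would apply Lemma~\ref{lem10.2} verbatim to the graph $\Gamma^\vee$, replacing everywhere $\alpha_s$ by $\alpha_s^\vee$, $\alpha_t^\vee$ by $\alpha_t$, and $R$ by $R^\vee$. The conclusion is precisely
$$
\alpha_s^\vee = \sum_{st\in R^\vee} \alpha_t,
$$
which is the identity we want.

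There is no real obstacle here: the entire content of the proof is the observation that Lemma~\ref{lem10.2} is a statement about an arbitrary nondegenerate graph together with its dual-basis expansion, and that $(\Gamma^\vee)^\vee = \Gamma$. If a referee objected that ``duality'' is too terse, I would expand the argument by writing out the analogues of Lemma~\ref{lem10.1} and the proof of Lemma~\ref{lem10.2} for $\Gamma^\vee$: define $f_s^\vee \in V^*$ by $f_s^\vee(\alpha_t^\vee) = \delta_{s,t}$ and observe $\theta(\alpha_s) = f_s^\vee$ from (\ref{e10.5}); then $\theta(\alpha_s^\vee) = \sum_{st\in R^\vee} f_t^\vee = \theta\bigl(\sum_{st\in R^\vee}\alpha_t\bigr)$ by Definition~\ref{defn10.1}, and Lemma~\ref{lem4.2}(ii) finishes the job. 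But the ``by duality'' framing is cleaner and is what I would actually write.
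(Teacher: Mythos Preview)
Your proposal is correct and is exactly the paper's own argument: the paper proves this lemma in one line, ``By duality and Lemma~\ref{lem10.2} the following lemma is straightforward,'' and your write-up simply unpacks what that duality means. The optional expanded version you sketch is also fine and would serve well if more detail were requested.
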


\begin{lem}\label{lem10.3}
Let $A$ and $A^\vee$ denote the adjacency matrices of $\Gamma$ and $\Gamma^\vee$ over $\mathbb{F}_2,$ respectively. Then $A$ and $A^\vee$ are inverses of each other.
\end{lem}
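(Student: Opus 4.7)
The plan is to read off the matrix identity $A \cdot A^\vee = I$ directly from the two substitution formulas (\ref{e10.1}) and (\ref{e10.2}), using that $\{\alpha_s \mid s\in S\}$ is a basis of $V$. The idea is that Lemma~\ref{lem10.2} expresses each $\alpha_s$ as an $\mathbb{F}_2$-linear combination of the $\alpha_t^\vee$ with coefficients that form the row of $A$ indexed by $s$, while Lemma~\ref{lem10.4} expresses each $\alpha_t^\vee$ as an $\mathbb{F}_2$-linear combination of the $\alpha_u$ with coefficients given by the row of $A^\vee$ indexed by $t$.

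Concretely, I would substitute (\ref{e10.2}) into (\ref{e10.1}) to obtain
$$
\alpha_s \;=\; \sum_{st\in R} \alpha_t^\vee \;=\; \sum_{t\in S} A_{st}\,\alpha_t^\vee \;=\; \sum_{t,u\in S} A_{st}\, A^\vee_{tu}\, \alpha_u \;=\; \sum_{u\in S} (A A^\vee)_{su}\, \alpha_u.
$$
Since $\{\alpha_u \mid u\in S\}$ is linearly independent, comparing coefficients of $\alpha_u$ on both sides yields $(A A^\vee)_{su} = \delta_{su}$, so $A A^\vee = I$. Because $A$ and $A^\vee$ are square adjacency matrices (hence symmetric), this one-sided inverse identity already implies $A^\vee A = I$ as well; alternatively one can run the symmetric argument by substituting (\ref{e10.1}) into (\ref{e10.2}).

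There is essentially no obstacle here: the lemma is a bookkeeping consequence of Lemmas~\ref{lem10.2} and~\ref{lem10.4}, which in turn rely on Lemma~\ref{lem4.2} (invertibility of $A$, equivalently nondegeneracy of $B$) to go between vectors and their images under $\theta$. The only point that deserves a brief mention is that nondegeneracy of $\Gamma^\vee$, already observed just before Lemma~\ref{lem10.2}, is what guarantees that the basis $\{\alpha_s^\vee\}$ and the edge set $R^\vee$ are well-defined and that Lemma~\ref{lem10.4} is a genuine dual of Lemma~\ref{lem10.2}.
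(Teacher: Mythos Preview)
Your proof is correct and follows essentially the same approach as the paper: both arguments use formula~(\ref{e10.2}) to show that the product of $A$ and $A^\vee$ is the identity, the paper by computing the $(s,t)$-entry of $A^\vee A$ as $B\big(\sum_{su\in R^\vee}\alpha_u,\alpha_t\big)=B(\alpha_s^\vee,\alpha_t)=\delta_{st}$ via~(\ref{e10.5}), and you by the equivalent substitution of~(\ref{e10.2}) into~(\ref{e10.1}) and comparison of coefficients in the basis $\{\alpha_u\}$. The only cosmetic difference is that the paper works directly with the bilinear form while you phrase it as a change-of-basis composition, but the underlying computation is the same.
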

\begin{proof}
We show that $A^\vee A$ is equal to the identity matrix. Let $s,t\in S$ be given. By the comment below Lemma~\ref{lem10.1} the $(s,t)$-entry of $A$ (resp. $A^\vee$) is equal to  $B(\alpha_s,\alpha_t)$ (resp. $B(\alpha_s^\vee,\alpha_t^\vee)$). By Definition~\ref{defn10.1} we find that the $(s,t)$-entry of $A^\vee A$ equals
\begin{align}\label{e10.7}
B\Bigg(\sum_{su\in R^\vee}\alpha_u,\alpha_t\Bigg).
\end{align}
By (\ref{e10.2}) the vector in the first coordinate of (\ref{e10.7}) equals $\alpha_s^\vee.$ Therefore (\ref{e10.7}) equals $1$ if and only if $s=t$ by (\ref{e10.5}). The result follows.
\end{proof}

We are now ready to prove Theorem~\ref{thm1.3}.

\medskip

{\it Proof of Theorem~\ref{thm1.3}.} In Lemma~\ref{lem10.1} we saw that $\theta(\alpha_s^\vee)=f_s$ for all $s\in S.$ Therefore (i), (ii) are equivalent by Theorem~\ref{thm1.2}. To show that $\Gamma$ is $2$-lit, it is now enough to consider the two cases: (a) $Q(\alpha_s^\vee)=0$ for all $s\in S;$ (b) $Q(\alpha_s^\vee)=1$ for all $s\in S.$

(a) It suffices to show that there exist $s,t\in S$ such that $Q(\alpha_s^\vee+\alpha_t^\vee)=1.$ Since the form $B$ is nontrivial there exist $s,t\in S$ such that $B(\alpha_s^\vee,\alpha_t^\vee)=1.$ Then the $s$ and $t$ are the desired elements in $S.$

(b) It suffices to show that there exist two distinct $s,t\in S$ such that $Q(\alpha_s^\vee+\alpha_t^\vee)=0.$ By our assumption, the graph $\Gamma$ is not a complete graph. Using Lemma~\ref{lem10.3}, we deduce that $\Gamma^\vee$ is not a complete graph. Therefore there exist two distinct $s,t\in S$ such that $B(\alpha_s^\vee,\alpha_t^\vee)=0.$ Such $s$ and $t$ are desired elements in $S.$ \hfill $\Box$


\medskip

To prove Corollary~\ref{cor1.1}, we give a sufficient condition for Theorem~\ref{thm1.3}(ii).

\begin{lem}\label{lem9.1}
Let $\Gamma=(S,R)$ denote a nondegenerate graph. Assume that there exists $s\in S$ with even degree in $\Gamma$ such that
\begin{gather}\label{e9.2}
\sum_{\scriptsize \{u,v\}\subseteq S
\atop \scriptsize su,sv\in R}
B(\alpha_u^\vee,\alpha_v^\vee)=0,
\end{gather}
where the sum is over all two-element subsets $\{u,v\}$ of $S$ with $su,sv\in R$. Then the restriction of $Q$ to $\{\alpha_t^\vee~|~st\in R\}$ is surjective.
\end{lem}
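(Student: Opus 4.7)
The plan is to compute $Q(\alpha_s)$ in two ways and compare. On the one hand, $\alpha_s$ belongs to the basis $P = \{\alpha_t \mid t \in S\}$ defining $Q = Q_P$, so by definition $Q(\alpha_s) = 1$. On the other hand, Lemma~\ref{lem10.2} gives the expansion $\alpha_s = \sum_{st \in R} \alpha_t^\vee$, and I can evaluate $Q$ on this sum using the defining relation (\ref{e2.2}).

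First I would establish by induction on $n$ the standard expansion
$$Q(v_1 + v_2 + \cdots + v_n) = \sum_{i=1}^n Q(v_i) + \sum_{1 \le i < j \le n} B(v_i, v_j)$$
for any vectors $v_1, \ldots, v_n \in V$; this is a routine consequence of (\ref{e2.2}). Applying this to $\alpha_s = \sum_{st \in R} \alpha_t^\vee$ yields
$$1 = Q(\alpha_s) = \sum_{st \in R} Q(\alpha_t^\vee) + \sum_{\scriptsize \{u,v\}\subseteq S \atop \scriptsize su,sv \in R} B(\alpha_u^\vee, \alpha_v^\vee).$$
By hypothesis (\ref{e9.2}) the second sum is $0$, so $\sum_{st \in R} Q(\alpha_t^\vee) = 1$ in $\mathbb{F}_2$.

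Now I invoke the parity assumption. Since $s$ has even degree in $\Gamma$, the number of summands in $\sum_{st \in R} Q(\alpha_t^\vee)$ is even. If the restriction of $Q$ to $\{\alpha_t^\vee \mid st \in R\}$ were constantly $0$ the sum would be $0$; if it were constantly $1$ the sum would equal the degree of $s$ modulo $2$, again $0$. In either case we contradict the equality just obtained. Hence $Q$ must take both values on this set, proving surjectivity.

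I do not foresee a serious obstacle: once the quadratic expansion formula and Lemma~\ref{lem10.2} are in hand, the argument is a short parity calculation. The only subtlety worth flagging is the correct use of (\ref{e2.2}) for a sum of more than two vectors, which is why I would write the expansion formula down explicitly before substituting.
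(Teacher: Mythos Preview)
Your proof is correct and follows essentially the same route as the paper: apply $Q$ to the identity $\alpha_s=\sum_{st\in R}\alpha_t^\vee$ from Lemma~\ref{lem10.2}, use (\ref{e2.2}) together with hypothesis (\ref{e9.2}) and $Q(\alpha_s)=1$ to obtain $\sum_{st\in R}Q(\alpha_t^\vee)=1$, and then invoke the even-degree assumption to conclude that $Q$ takes both values on the neighbors. The only cosmetic difference is that the paper phrases the final step directly (the sum being $1$ forces some $Q(\alpha_u^\vee)=1$, and even degree then forces some $Q(\alpha_v^\vee)=0$) rather than as a contradiction argument.
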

\begin{proof}
Apply $Q$ to either side of (\ref{e10.1}). Using (\ref{e2.2}), (\ref{e9.2}) and $Q(\alpha_s)=1$ to evaluate the resulting equation, we obtain that
\begin{eqnarray}\label{e9.1}
\sum_{st\in R}Q(\alpha_t^\vee)=1.
\end{eqnarray}
By (\ref{e9.1}) there exists a neighbor $u$ of $s$ for which $Q(\alpha_u^\vee)=1.$ Since $s$ has even degree in $\Gamma$ there exists a neighbor $v$ of $s$ for which $Q(\alpha_v^\vee)=0.$ The result follows.
\end{proof}


\smallskip

{\it Proof of Corollary~\ref{cor1.1}.}
By Proposition~\ref{lem5.4} a nondegenerate bipartite graph $\Gamma$ is a line graph if and only if $\Gamma$ is a path of even order. Since every path is $1$-lit, this corollary holds for $\Gamma$ as a line graph. We thus assume that $\Gamma$ is not a line graph. By Theorem~\ref{thm1.3} the graph $\Gamma$ is $2$-lit. By Lemma~\ref{lem10.3} we deduce that the graph $\Gamma^\vee$ is bipartite with bipartition as same as that of $\Gamma.$ We use this to show that (i), (ii) are equivalent.

(ii) $\Rightarrow$ (i): Let $s$ denote a vertex of $\Gamma$ with even degree. Since $\Gamma$ and $\Gamma^\vee$ are bipartite graphs with same bipartition, we deduce that $B(\alpha_u^\vee,\alpha_v^\vee)=0$ for any neighbors $u,v$ of $s$ in $\Gamma$. Therefore (\ref{e9.2}) holds. By Lemma~\ref{lem9.1} the restriction of $Q$ on $\{\alpha_t^\vee~|~st\in R\}$ is onto. Therefore $\Gamma$ is $1$-lit by Theorem~\ref{thm1.3}.

(i) $\Rightarrow$ (ii): Suppose on the contrary that each vertex of $\Gamma$ has odd degree. Using Lemma~\ref{lem10.3} we deduce that each vertex of $\Gamma^\vee$ has odd degree. Let $s$ denote any element of $S$. By (\ref{e10.2}) the value $Q(\alpha_s^\vee)$ equals
\begin{align}\label{e11.1}
Q\Bigg(\sum_{st\in R^\vee}\alpha_t \Bigg).
\end{align}
Since the bipartite graphs $\Gamma$ and $\Gamma^\vee$ have the same bipartition, we deduce that $B(\alpha_u,\alpha_v)=0$ for any neighbors $u,v$ of $s$ in $\Gamma^\vee.$ By (\ref{e2.2}) the summation in (\ref{e11.1}) can be moved out front. Since $Q(\alpha_s)=1$ for all $s\in S$, it follows that (\ref{e11.1}) equals $1$, contradicting to Theorem~\ref{thm1.3}(ii).
\hfill $\Box$

\medskip








\medskip

\noindent Hau-wen Huang
\hfil\break Mathematics Division
\hfil\break National Center for Theoretical Sciences
\hfil\break National Tsing-Hua University
\hfil\break Hsinchu 30013, Taiwan, R.O.C.
\hfil\break Email:  {\tt hauwenh@math.cts.nthu.edu.tw}
\medskip

\end{document}